 \newtheorem{thm}{Theorem}
 \newtheorem{cor}{Corollary}
 \theoremstyle{definition}
 \theoremstyle{remark}
 \numberwithin{equation}{section}
\begin{document}
%
%
%
%
%
%
%
%
%
\title
 {An inequality for means with applications}
\author[J.-C. Schlage-Puchta]{Jan-Christoph Schlage-Puchta}

\address{
Mathematisches Institut\\
Eckerstr. 1\\
79104 Freiburg\\
Germany
}

\email{jcp@math.uni-freiburg.de}

\subjclass{Primary 26D15; Secondary 11M06, 15A15, 20C30}

\keywords{inequality, mean values, fourth mean of Zeta-function,
  Hadamard-matrices, Character degrees}

\date{Novembre 25, 2006}

\begin{abstract}
We show that an almost trivial inequality between the first and second
moment and the maximal value of a random variable can be used to
slightly improve deep theorems.
\end{abstract}

\maketitle

One often estimates individual values of a function by computing a
certain means. This approach is particularly useful in situations where
the conjectured maximum of a function is close to its mean, as is
often the case in number theory. Here, we give a simple method which
sometimes allows to improve the resulting estimates. To ease
aplications, we formulate our result in the language of probability
theory.
\begin{thm}
Let $\xi$ be a non-negative real random variable, and suppose that
$\mathbf{E}\xi=1$, 
and $\mathbf{E}\xi^2=a$ with $a>1$. Then the probability $P(\xi\geq a)$
is positive, and for every $b<a$ we have 
\[
\int\limits_{|\xi|>b} \xi^2 \geq a-b.
\]
\end{thm}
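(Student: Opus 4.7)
The entire argument can be built on a single pointwise observation: since $\xi \geq 0$, for any $c \geq 0$ we have $\xi^2 \leq c\xi$ on the event $\{\xi \leq c\}$, with strict inequality wherever $0 < \xi < c$. My plan is to apply this with $c = b$ for the integral inequality, and with $c = a$ for the positivity statement.

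For the integral bound I would split
\[
\mathbf{E}\xi^2 \;=\; \int_{\xi \leq b} \xi^2 \;+\; \int_{\xi > b} \xi^2
\]
and estimate the first summand by $\int_{\xi \leq b}\xi^2 \leq b\int_{\xi \leq b}\xi \leq b\,\mathbf{E}\xi = b$. Since $\mathbf{E}\xi^2 = a$ and $|\xi| = \xi$, rearranging immediately yields the claimed inequality $\int_{|\xi|>b}\xi^2 \geq a-b$.

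For $P(\xi \geq a) > 0$ I would proceed by contradiction. If $P(\xi \geq a) = 0$, then $\xi < a$ almost surely, so the same pointwise inequality gives $\xi^2 \leq a\xi$ almost surely. The hypothesis $\mathbf{E}\xi = 1$ forces $P(\xi > 0) > 0$, and on the event $\{0 < \xi < a\}$ (which agrees up to a null set with $\{\xi > 0\}$) the bound is strict. Integrating therefore produces the strict inequality $\mathbf{E}\xi^2 < a\,\mathbf{E}\xi = a$, contradicting $\mathbf{E}\xi^2 = a$.

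I foresee no genuine obstacle; the substance of the theorem is really just the elementary bound $\xi^2 \leq c\xi$ on $\{\xi \leq c\}$, in keeping with the paper's own framing of the inequality as ``almost trivial''. The only point requiring a modicum of care is the strictness of the inequality on $\{0 < \xi < a\}$, which is what actually produces the contradiction in the positivity statement; the integral inequality by itself, in the limit $b \to a^{-}$, degenerates to the trivial bound $\mathbf{E}[\xi^{2}\mathbf{1}_{\xi \geq a}] \geq 0$ and so does not suffice.
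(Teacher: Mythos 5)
Your proof is correct and follows essentially the same route as the paper: the integral bound comes from the identical estimate $\int_{\xi\le b}\xi^2\le b\int_{\xi\le b}\xi\le b\,\mathbf{E}\xi=b$, and the positivity claim is again proved by contradiction from the assumption that $\xi<a$ almost surely. The only (minor) difference is in how the strictness is extracted: the paper picks an explicit $\epsilon>0$ with $\int_{\xi\le a-\epsilon}\xi\ge\frac{1}{2}$ to conclude $\mathbf{E}\xi^2\le a-\epsilon/2<a$, whereas you invoke the standard fact that $a\xi-\xi^2\ge0$ a.s.\ together with strict positivity on an event of positive probability forces $\mathbf{E}(a\xi-\xi^2)>0$; both arguments are valid and of the same depth.
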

\begin{proof}
We have
\[
\int\limits_{|\xi|\leq b} \xi^2 \leq b\int\limits_{|\xi|>b} \xi \leq b\int\xi = b,
\]
which implies the claimed inequality. Now suppose that 
$|\xi|<a$ almost surely. Then there exists some $\epsilon>0$, such that
$\int\limits_{|\xi|\leq a-\epsilon} \xi\geq\frac{1}{2}$, and we obtain
\begin{multline*}
a = \int \xi^2 = \int\limits_{|\xi|\leq a-\epsilon} \xi^2 +
\int\limits_{|\xi|> a-\epsilon} \xi^2\\
 \leq (a-\epsilon)\int\limits_{|\xi|\leq a-\epsilon} \xi +
 a\int\limits_{|\xi|>a-\epsilon} \xi \leq \frac{(a-\epsilon) + a}{2} < a, 
\end{multline*}
a contradiction.
\end{proof}
We now give three applications to quite different areas.

Our first application shows that the fourth moment of the Riemann
$\zeta$-function is dominated by large values of $\zeta$, in fact, by values
which are so large that the fourth moment itself cannot guarantee them
to exist.

\begin{cor}
There is a constant $C$ such that for $t>T_0(\epsilon)$ and $H>T^{2/3}\log^CT$
we have
\[
\int\limits_{\{t\in[T, T+H]: |\zeta(\frac{1}{2}+it)|>\frac{1}{4\pi^2}\log^{3/2}
  t\}} |\zeta(\frac{1}{2}+it)|^4\;dt \geq \frac{1-\epsilon}{4\pi^2}T\log^4T +
\mathcal{O}(T\log^3 T).
\]
\end{cor}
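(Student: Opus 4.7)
\medskip\noindent\textbf{Proof sketch.}
My plan is to apply the theorem directly, taking $\xi$ to be a suitably normalized version of $|\zeta(\tfrac12+it)|^2$ with $t$ uniformly distributed on $[T,T+H]$. Concretely, set $\xi(t)=|\zeta(\tfrac12+it)|^2/\log T$ and use the probability measure $dt/H$, so that $\xi^2=|\zeta(\tfrac12+it)|^4/\log^2 T$.

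The analytic inputs are the second- and fourth-moment asymptotics on the short interval. The second-moment estimate
\[
\int_T^{T+H}|\zeta(\tfrac12+it)|^2\,dt = H\log T+\mathcal{O}(H)
\]
is classical. The short-interval fourth-moment asymptotic
\[
\int_T^{T+H}|\zeta(\tfrac12+it)|^4\,dt = \frac{1}{2\pi^2}H\log^4 T+\mathcal{O}(H\log^3 T)
\]
is the genuinely deep ingredient, and it is precisely this that forces the hypothesis $H>T^{2/3}\log^C T$ (Iwaniec, Jutila, and later refinements). Together they yield $\mathbf{E}\xi = 1+o(1)$ and $\mathbf{E}\xi^2 = \frac{\log^2 T}{2\pi^2}(1+o(1))$; after a harmless $1+o(1)$ rescaling of $\xi$ to make its mean exactly $1$, I am in the setting of the theorem with $a\sim\tfrac{\log^2 T}{2\pi^2}$.

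I would then apply the theorem with $b$ chosen so that $\{\xi>b\}$ coincides (up to $o(1)$) with the event $\{|\zeta(\tfrac12+it)|>\tfrac{1}{4\pi^2}\log^{3/2}t\}$; solving for $b$ places it as a fixed fraction of $a$, strictly less than $a$ for $T$ large. The theorem then delivers
\[
\frac{1}{H}\int_{\xi>b}\xi^2\,dt \geq a-b,
\]
and multiplying through by $H\log^2 T$ and reinserting $|\zeta(\tfrac12+it)|^4=\log^2 T\cdot\xi^2$ produces the stated inequality, with the error terms from both moment asymptotics absorbed into the $\mathcal{O}(T\log^3 T)$ remainder. The main obstacle is entirely analytic: the short-interval fourth-moment asymptotic is the hard step, and the probabilistic one is then a single line. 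Its content, however, is exactly the novel point of the paper — extracting from a known mean-value identity an unconditional one-sided lower bound on the tail where $|\zeta|$ is much larger than its typical size $\sqrt{\log T}$.
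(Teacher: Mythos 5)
Your proposal matches the paper's proof: the paper likewise sets $\xi$ equal to $|\zeta(\tfrac12+it)|^2$ normalized by its mean over $[T,T+H]$ (which is $\sim\log T$ by Ingham), computes $a\sim\tfrac{1}{2\pi^2}\log^2 T$ from the Ivi\'c--Motohashi fourth-moment asymptotic whose error term $\mathcal{O}(T^{2/3}\log^C T)$ is exactly what forces $H>T^{2/3}\log^C T$, and applies the theorem with $b=\tfrac{1}{4\pi^2}\log^2 T$, a fixed fraction of $a$. Your identification of both the analytic inputs and the single-line probabilistic step is essentially identical to the paper's argument.
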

\begin{proof}
Ingham proved 
\[
\int_0^T |\zeta(\frac{1}{2}+it)|^2\;dt = T\log\frac{T}{2\pi}+(2\gamma-1)T+\mathcal{O}/T^{1/2+\epsilon}),
\]
and Ivic and Motohashi\cite{IM} showed that 
\[
\int_0^T |\zeta(\frac{1}{2}+it)|^4\;dt = T\cdot P(\log T) + \mathcal{O}(T^{2/3}\log^CT),
\]
where $P$ is a polynomial of degree 4 with leading term
$\frac{1}{2\pi^2}$ (confer also \cite{HB}).
Now apply Theorem 1 by setting
\[
\xi=H\cdot|\zeta(\frac{1}{2}+it)|^2\Bigg(\int\limits_T^{T+H}
|\zeta(\frac{1}{2}+it)|^2\;dt\Bigg)^{-1}
\]
for $t\in[T, T+H]$ chosen at random and $b=\frac{1}{4\pi^2}\log^2 T$.
\end{proof}

Our second application slightly improves on the approach of Szekeres
and Tur\'an \cite{ST} on the problem of Hadamard-matrices.

\begin{cor}
For every $\epsilon>0$ and $n>n_0(\epsilon)$ there exists a skew-symmetric
$n\times n$-matrix $A$ with 
entries $\pm 1$ satisfying
\[
|\det A|> \left(\frac{n}{64 \pi e^5}\right)^{1/4}e^{\sqrt{n}} \sqrt{n!}. 
\]
\end{cor}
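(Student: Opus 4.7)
The plan is to apply Theorem 1 on top of the probabilistic construction of Szekeres and Tur\'an~\cite{ST}. I work with the same ensemble of random skew-symmetric $\pm 1$ matrices used there, and set
\[
\xi = \frac{|\det A|^2}{M},\qquad M := \mathbf{E}|\det A|^2.
\]
Szekeres and Tur\'an compute $M$ explicitly and, by the trivial observation that some matrix in the ensemble attains at least the average of $|\det A|^2$, deduce a lower bound of the shape $|\det A|\ge c\,e^{\sqrt n}\sqrt{n!}$.

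The key step is to replace ``at least the average'' by the sharper statement of Theorem 1: with $\mathbf{E}\xi=1$ and $a:=\mathbf{E}\xi^2$, the theorem guarantees a matrix $A$ with $|\det A|^2\ge a\cdot M$, improving the Szekeres-Tur\'an estimate by a factor of $\sqrt a$. For the factor $(n/(64\pi e^{5}))^{1/4}$ in the corollary to come out, I need to establish
\[
a \;=\; \Bigl(\frac{n}{64\pi e^{5}}\Bigr)^{1/2}\bigl(1+o(1)\bigr).
\]

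Almost all the work therefore lies in estimating the fourth moment $\mathbf{E}|\det A|^4$. Since $\det A=\mathrm{Pf}(A)^2$ for skew-symmetric $A$, with $\mathrm{Pf}$ the Pfaffian, one has $|\det A|^4=\mathrm{Pf}(A)^8$, and expanding the Pfaffian as a signed sum over perfect matchings of $[n]$ reduces the expectation to a weighted count of $8$-tuples of matchings in which every edge appears with even multiplicity. This is a direct, if more intricate, extension of the pair-of-matchings enumeration already carried out in~\cite{ST}: the leading asymptotic is driven by the $7!!=105$ pairings of the eight matchings, with the lower-order contributions controlled by parallel counts. Extracting the precise constant $1/\sqrt{64\pi e^5}$ comes from combining this combinatorial data with the Stirling asymptotics $(n-1)!!\sim\sqrt2\,(n/e)^{n/2}$ and $\sqrt{n!}\sim(2\pi n)^{1/4}(n/e)^{n/2}$. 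This moment calculation is the main obstacle; once $a$ has been pinned down, the corollary is an immediate application of Theorem 1.
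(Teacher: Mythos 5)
Your overall strategy --- apply Theorem 1 to a normalized power of $\det A$ over the Szekeres--Tur\'an ensemble --- is the right one, but you have normalized at the wrong level, and this opens a gap that your sketch does not close. Setting $\xi=|\det A|^2/M$ with $M=\mathbf{E}|\det A|^2$ forces you to evaluate $a=\mathbf{E}\xi^2=\mathbf{E}|\det A|^4/M^2$, i.e.\ the \emph{fourth} moment of the determinant (the eighth moment of the Pfaffian). This moment is computed neither in \cite{ST} nor in \cite{Szekeres}, and your account of how to obtain it --- that the leading term comes from the $7!!=105$ pairings of the eight matchings, with everything else ``controlled by parallel counts'' --- is an assertion, not an argument. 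It is also the opposite of what the known moments suggest: already for the first and second moments the non-diagonal configurations are \emph{not} negligible, being precisely the source of the correction factors $e^{\sqrt n}$ and $e^{2\sqrt n}$ in Szekeres's asymptotics for $s_1(n)$ and $s_2(n)$, so one must expect an analogous (and a priori unknown) exponential correction in the eighth Pfaffian moment. The asymptotic $a\sim(n/(64\pi e^5))^{1/2}$ is therefore reverse-engineered from the statement rather than derived, and even granting it the constants do not close: Theorem 1 gives $|\det A|\ge(aM)^{1/2}$, and $M^{1/2}$ carries the nontrivial constant from Szekeres's second-moment asymptotic, so you would land at the stated bound multiplied by that extra constant. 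In short, the one step you defer is the entire content of the corollary.

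The intended argument sits one level lower and requires no new moment computation. The determinant of such a matrix is non-negative (it is essentially the square of a Pfaffian), so one may take $\xi=\det A/s_1(n)$ with $s_1(n)=\mathbf{E}\det A$; then $a=\mathbf{E}\xi^2$ is expressed through the first and second moments of $\det A$ alone, both of which Szekeres \cite{Szekeres} determined, namely $s_1(n)\sim(8\pi en)^{-1/4}e^{\sqrt n}\sqrt{n!}$ and the corresponding second-moment asymptotic. This gives $a$ of order $\sqrt n$, and Theorem 1 then produces a matrix with $\det A\ge a\,s_1(n)$, which is exactly the claimed gain of order $n^{1/4}$ over the first-moment bound --- the same factor-of-$\sqrt a$ improvement you describe, but with an $a$ that is actually available. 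If you wish to keep your normalization, you must genuinely carry out the eighth-moment computation for the Pfaffian, including the analogue of Szekeres's analysis of the non-pairing configurations; as written, that is missing.
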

\begin{proof}
Let $A$ be a random skew-symmetric $n\times n$-matrix with entries $\pm
1$, and let $s_k(n)$ be the $k$-th mean of the determinant of
$A$. Szekeres\cite{Szekeres} showed that 
\begin{eqnarray*}
s_1(n) & \sim & \frac{1}{\sqrt[4]{8\pi en}} e^{\sqrt{n}} \sqrt{n!},\\
s_2(n) & \sim & \frac{1}{\sqrt{32\pi e^3}} e^{2\sqrt{n}} \sqrt{n!}.
\end{eqnarray*}
Our claim now follows by applying our theorem to $\frac{\det A}{s_1(n)}$.
\end{proof}

Our last result improves on the work of Kerov and Vershik \cite{KV}
concerning the largest degree of an irreducible character of the
symmetric group. 

\begin{cor}
Let $\epsilon>0$ be given. Then for every $n>n_0(\epsilon)$ there exists an
irreducible character $\chi$ of $S_n$ with $\chi(1)>(1-\epsilon)e^{1/4}\sqrt{\pi
  n}e^{-\sqrt{n}}\sqrt{n!}$.
\end{cor}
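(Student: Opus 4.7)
The plan is to mirror the proof of Corollary 2 with characters in place of determinants. Let $\chi$ be a random irreducible character of $S_n$ chosen with probability $\chi(1)/i(n)$, where $i(n)=|\{g\in S_n:g^2=1\}|$ is the number of involutions; this is a genuine probability distribution because the Frobenius--Schur count, together with the fact that every irreducible representation of $S_n$ is defined over $\mathbb{R}$, gives $\sum_\chi\chi(1)=i(n)$. Under this weighting,
\[
s_1(n):=\mathbf{E}\chi(1)=\frac{n!}{i(n)},\qquad s_2(n):=\mathbf{E}\chi(1)^2=\frac{1}{i(n)}\sum_\chi\chi(1)^3,
\]
so, setting $\xi=\chi(1)/s_1(n)$, one has $\mathbf{E}\xi=1$ and $\mathbf{E}\xi^2=s_2(n)/s_1(n)^2$. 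Theorem 1 then produces an irreducible character with
\[
\chi(1)\geq \frac{s_2(n)}{s_1(n)}=\frac{1}{n!}\sum_\chi\chi(1)^3.
\]

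What remains is asymptotic evaluation. The classical involution asymptotic $i(n)\sim\frac{1}{\sqrt2}(n/e)^{n/2}e^{\sqrt n-1/4}$ yields $s_1(n)\sim 2^{3/4}(\pi n)^{1/4}e^{1/4-\sqrt n}\sqrt{n!}$, which on its own would only recover the Kerov--Vershik bound $\chi(1)\geq s_1(n)$, weaker than the claim by a factor of order $n^{1/4}$. Theorem 1 upgrades this trivial mean bound to the second-moment bound $\sum_\chi\chi(1)^3/n!$, which is exactly $\mathbf{E}\chi(1)$ under Plancherel measure. One must then show
\[
\frac{1}{n!}\sum_\chi\chi(1)^3\sim e^{1/4}\sqrt{\pi n}\,e^{-\sqrt n}\sqrt{n!}
\]
by invoking Kerov and Vershik's description of the Plancherel-typical partition, for which the hook length formula gives $\chi(1)\approx\sqrt{n!}\,e^{-\sqrt n}$ to leading exponential order, with the second-order fluctuation analysis around the limit shape supplying the polynomial factor $e^{1/4}\sqrt{\pi n}$.

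The main obstacle is this last-displayed asymptotic. The invocation of Theorem 1 itself is a one-line formal normalisation, and all the substance of the improvement over Kerov--Vershik lies in pinning down the sharp constant and polynomial factor in the first Plancherel moment of $\chi(1)$; the inequality $\sum_\chi\chi(1)^3\cdot i(n)\geq(n!)^2$, which is a Cauchy--Schwarz consequence of $\sum_\chi\chi(1)^2=n!$, already guarantees that the step via Theorem 1 strictly improves on the mean bound, but the precise factor $e^{1/4}\sqrt{\pi n}$ requires the sharp asymptotic above.
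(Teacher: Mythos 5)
Your reduction via Theorem 1 is sound as far as it goes, but it takes a different route from the paper and leaves the decisive step unproved. The paper puts the \emph{uniform} measure on the $p(n)$ irreducible characters and sets $\xi=\chi(1)/\sqrt{n!}$; the two moments then come straight from the classical identities $\sum_\chi\chi(1)=\#\{\pi:\pi^2=\mathrm{id}\}$ and $\sum_\chi\chi(1)^2=n!$ together with the known asymptotics for the involution count and for $p(n)$, and Theorem 1 yields a character with $\chi(1)\geq(1-\epsilon)\,n!/\sum_\chi\chi(1)$ --- everything needed is off the shelf. You instead weight each character by $\chi(1)/i(n)$, which pushes the second moment up to $\sum_\chi\chi(1)^3$ and leads to the bound $\chi(1)\geq\sum_\chi\chi(1)^3/n!$, i.e.\ the first moment of the dimension under the Plancherel measure. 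That is formally stronger, but there is no classical evaluation of $\sum_\chi\chi(1)^3$, and the asymptotic you need, namely $\frac{1}{n!}\sum_\chi\chi(1)^3\sim e^{1/4}\sqrt{\pi n}\,e^{-\sqrt n}\sqrt{n!}$, is asserted rather than proved: the appeal to the Vershik--Kerov limit shape with ``second-order fluctuation analysis'' is a heuristic, not an argument, and it is not even clear that the Plancherel-typical dimension has exponential order exactly $e^{-\sqrt n}\sqrt{n!}$, nor that the mean is governed by typical rather than atypical partitions. This is the gap, and you flag it yourself as ``the main obstacle.''

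Your fallback, Cauchy--Schwarz in the form $\sum_\chi\chi(1)^3\geq(n!)^2/i(n)$, only returns you to the paper's bound $\chi(1)\geq n!/i(n)$; with the involution asymptotic you quote, $i(n)\sim 2^{-1/2}(n/e)^{n/2}e^{\sqrt n-1/4}$, this gives a lower bound of order $n^{1/4}e^{1/4-\sqrt n}\sqrt{n!}$, which misses the stated $\sqrt{\pi n}$ by a factor of order $n^{1/4}$, exactly as you concede. So the proposal as written does not establish the corollary. (For what it is worth, the paper reaches the $\sqrt{\pi n}$ factor only because it normalizes the involution asymptotic as $\sum_\chi\chi(1)\sim\frac{e^{\sqrt n-1/4}}{2\sqrt{\pi n}}\sqrt{n!}$, which disagrees with the version you use by a factor of order $n^{1/4}$; one of the two normalizations must be wrong, and that discrepancy would have to be resolved before either argument could be considered to prove the corollary with the stated polynomial factor.)
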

\begin{proof}
All irreducible complex representations of $S_n$ can be realized over
$\mathbb{R}$, thus
\[
\sum_{\chi} \chi(1) = \#\{\pi\in S_n: \pi^2=\mathrm{id}\} \sim
\frac{e^{\sqrt{n}-\frac{1}{4}} }{2\sqrt{\pi n}}\sqrt{n!},
\]
whereas the orthogonality relation implies $\sum_\chi\chi(1)^2=n!$. Finally, the
number of irreducible characters equals the number $p(n)$ of
partitions of $n$, for which we have the asymptotic formula
\[
p(n)\sim \frac{1}{4n\sqrt{3}}e^{\pi\sqrt{2n/3}}.
\]
Define a random variable $\xi$ as $\frac{\chi(1)}{\sqrt{n!}}$, where $\chi$ is
chosen at random 
among all irreducible characters, where each character has the same
probability. Then we obtain
\begin{eqnarray*}
\mathbf{E}\xi & \sim & \frac{2\sqrt{3n}}{e^{1/4}\sqrt{\pi}}\exp\big((1-\pi\sqrt{2/3})\sqrt{n}\big)\\
\mathbf{E}\xi^2 & \sim & 4n\sqrt{3} \exp\big(-\pi\sqrt{2n/3}\big),
\end{eqnarray*}
and our claim follows.
\end{proof}

\end{document}